%% file: Disconnectedness_of_the_cayley_plane.tex
\documentclass[11pt,letterpaper]{amsart}

\usepackage{amsmath,amscd}
\usepackage{booktabs,diagbox}

\usepackage{setspace}
\usepackage{multicol}
\usepackage[lite]{amsrefs} 
\usepackage{amssymb} 
\usepackage{graphicx} 
\usepackage{euscript}
\usepackage{color}
\usepackage{array}
\usepackage{picture}
\usepackage{epic}
\usepackage{tikz}
 \usetikzlibrary{backgrounds,cd}
\usepackage{comment}
\usepackage{enumerate}
\usepackage{url}
\usepackage[colorlinks=true, linkcolor=black, citecolor=black, urlcolor=black]{hyperref}
\usepackage{fancyhdr} 
\usepackage{mathtools}
\usepackage{bookmark}
\usepackage[margin=1.4in]{geometry}
\usepackage{caption}
\usepackage{MnSymbol}
\usepackage{enumitem}
\usepackage{braket}

\usepackage{xcolor}
\usepackage{rotating}
\usepackage[all]{xy}
\usepackage{comment}
\usepackage{pdflscape}
\usepackage{tikz}
\usepackage{tikz-cd}
\usepackage{soul,color}
\usepackage[colorinlistoftodos]{todonotes}

\usepackage{tabmacD}

\newcommand*{\fullref}[1]{\hyperref[{#1}]{\ref*{#1}. \nameref*{#1}}}

\tikzset{
  symbol/.style={
    draw=none,
    every to/.append style={
      edge node={node [sloped, allow upside down, auto=false]{$#1$}}
    }
  }
}

\include{pack}

\numberwithin{equation}{section}

\makeatletter
\@namedef{subjclassname@2020}{%
  \textup{2020} Mathematics Subject Classification}
\makeatother

\begin{document}
	\title[Disconnectedness of the Hilbert Schemes of $E_6/P_6$]{Disconnectedness of the Hilbert Schemes of $E_6/P_6$}

\author{Minyoung Jeon}
	  \address{Department of Mathematics, Seoul Women’s University, Seoul 01797, Republic of Korea }\email{\url{minyoung.jeon@swu.ac.kr}}

\subjclass[2020]{Primary 14Q10, 14M15 ; Secondary 5E14} 
\keywords{The Cayley plane, Hilbert scheme, connected components}
\date{\today}

\begin{abstract}
In this note, we show that the Hilbert scheme $\text{Hilb}_{P_{d,4}(t)}(E_6/P_6)$ associated with the Hilbert polynomial $P_{d,4}(t)$ is disconnected by determining that it has exactly two connected components. This result adapts Seong's methods, successfully extending the disconnectedness of Grassmannians to the exceptional type $E_6$.

\end{abstract}
\maketitle 
\setcounter{tocdepth}{2}

\setcounter{tocdepth}{1}

\section{Introduction}
  
Let $X$ be a projective variety. Given a fixed polynomial $P(t)$, the Hilbert scheme $\text{Hilb}_{P(t)}(X)$ parametrizes subschemes of $X$ having Hilbert polynomial $P(t)$. The study of the connectedness of such Hilbert scheme is originated in 1966 by Hartshorne \cite{Hart66} that the Hilbert scheme $\text{Hilb}_{P(t)}(\mathbb{P}^n)$ of subschemes of projective space $\mathbb{P}^n$ is connected. In fact, this connectivity does not universally extend to rational homogeneous varieties. Specifically, when $X$ is an ordinary Grassmannian $Gr(k,n)$ for $1<k<n-1$ \cite{Seong20} or an orthogonal Grassmannian \cite{Jeon26}, its Hilbert scheme $\text{Hilb}_{P(t)}(X)$ is known to be disconnected under certain conditions. However, the geometry of Hilbert schemes associated with exceptional types remains an open area of study. The goal of this note is to investigate whether this property occurs in exceptional types. 

By focusing on the Cayley plane, which is an $E_6$-type Grassmannian, we show that its Hilbert scheme $\text{Hilb}_{p(t)}(E_6/P_6)$ is not necessarily connected.

\begin{introthm}
Let $d\geq 3$. The Hilbert scheme $\text{Hilb}_{P_{d,4}(t)}(E_6/P_6)$ consists of exactly two connected components. In particular, it is disconnected.
\end{introthm}
This result is developed in \S\ref{sec3}, where the exact number of components is given in Theorem \ref{t:3.2} and the disconnectedness is in Corollary \ref{c:discon}. For the proof, we extend the work of \cite{Seong20} on the ordinary Grassmannian by combining their methodology with Schubert calculus techniques specific to the $E_6$-type Grassmannian.

A natural direction for future research is to investigate the Nef cone of the Hilbert scheme $\text{Hilb}_{P_{d,4}(t)}(E_6/P_6)$ using Theorem \ref{t:bundle} on the geometry of the Hilbert scheme, analogous to the work of \cite{Seong} for ordinary Grassmannians and \cite{Jeon26} for orthogonal Grassmannians.

Before proceeding to these proofs in \S\ref{sec3}, we recall the necessary background for the $E_6$-type Grassmannian in \S\ref{sec2}.

\section{Background on Flag varieties for $E_6$}\label{sec2}

We review the description of the flag varieties for an Albert algebra and some basic notions necessary for \S\ref{sec3}. The main references for this section are \cites{Gari01,SV00}.

\subsection{Albert algebras}
Let $(\mathfrak{C},\star)$ be the split para-octonion algebra with the multiplication $\star$ on $\mathfrak{C}$, see \cite[\S 2]{Zhao25} for the precise definition of the split para-octonion. 
For a fixed diagonal matrix $\gamma=diag(\gamma_1,\gamma_2,\gamma_3) \in GL_3(\mathbb{C})$, we let $J=H(\mathfrak{C};\gamma)$ be the set of $\gamma$-hermitian $3\times 3$ matrices
\[
x=h(\xi_1,\xi_2,\xi_3;c_1,c_2,c_3)=\begin{bmatrix}
\xi& c_3& \gamma_1^{-1}\gamma_3\overline{c_2}\\
\gamma_2^{-1}\gamma_1\overline{c_3}&\xi_2&c_1\\
c_2&\gamma_3^{-1}\gamma_2\overline{c_1}&\xi_3
\end{bmatrix}
\]
with $\xi_i\in\mathbb{C}$ and $c_i\in \mathfrak{C}$. Here we denote by $\overline{a}$ the conjugation of $a$. A product in $J$ is given by 
\[
xy=\dfrac{1}{2}(x\cdot y+y\cdot x)
\]
where the dot denotes the standard matrix product. We note that the multiplication is not associative. The algebra J is called an {\it Albert algebra}, i.e., a $27$-dimensional central simple exceptional Jordan algebra. 

Then $J$ is naturally equipped with a cubic norm $N: J \rightarrow \mathbb{C}$. We denote by $\text{Inv}(J)$ the group of linear automorphisms preserving the norm of $J$, which is the set of invertible linear maps $\phi: J \rightarrow J$ such that $N(\phi(x)) = N(x)$ for all $x \in J$. For brevity, these maps $\phi$ will be referred to as norm isometries. This group $G=\text{Inv}(J)$ precisely realizes the simply connected complex algebraic group of type $E_6$.

Given a cubic norm $N$ on $J$, we define the linear map $f_x:J\rightarrow \mathbb{C}$ such that $f_x(y)$ is the coefficient of the variable $t$ in the expansion of $N(x+ty)$ for $x,y \in J$. As $J$ is endowed with a linear trace map $T:J\rightarrow \mathbb{C}$, we have a non-degenerate form $\beta:J\times J\rightarrow \mathbb{C}$ defined by $\beta(x,y):=T(xy)$. A quadratic map $\#:J\rightarrow J, x\mapsto x^\#$ is defined uniquely by 
\[
\beta(x^\#,y)=f_x(y)
\]
for all $y\in J$, satisfying $(x^\#)^\#=N(x)x$. The {\it Frueudenthal cross product} is a linearization of $\#$ given by 
\[
x\times y:=(x+y)^\#-x^\#-y^\#.
\]

\subsection{Flag varieties}

Let $G$ be a simple linear algebraic group over $\mathbb{C}$, $T\subset G$ the maximal torus, and $B(\supset T)$ a Borel subgroup of $G$. Let $\Delta=\{\alpha_1,\ldots,\alpha_n\}$ be the set of simple roots, where $n$ is the rank of $G$. For any set of indices $I\subset\{1,\ldots,n\}$, we denote by $P_I$ the standard parabolic subgroup obtained by omitting simple roots $\{\alpha_j\mid j\in I\}$. The flag variety associated with $I$ is given by 
\[
Fl(I):=G/P_I.
\]

We consider the group $G=\text{Inv}(J)$ of type $E_6$ with an Albert algebra $J$. We index the simple roots of the Dynkin diagram as
\[
\begin{tikzcd}
&&\overset{2}{\circ}&&\\
\underset{1}{\circ}\arrow[dash,r]&\underset{3}{\circ}\arrow[dash,r]&\underset{4}{\circ}\arrow[dash,u]\arrow[dash,r]&\underset{5}{\circ}\arrow[dash,r]&\underset{6}{\bullet}.
\end{tikzcd}
\]

We say an element $x\in J$ is of rank one if $x^\#=0$. A subspace $V$ of $J$ is {\it totally singular} if all elements of $V$ are of rank one.

For $E_6=Inv(J)$, each $Fl(i)=E_6/P_i$ corresponds to $i$-spaces, as subspaces of $J$. The $1$-spaces, $3$-spaces and $4$-spaces are totally singular subspace of $J$ with dimensions $1,2,$ and $3$, respectively. The $5$-spaces are the $5$-dimensional maximal totally singular subspaces, and the $2$-spaces are the $6$-dimensional totally singular subspaces. Lastly, the $6$-spaces known as the {\it Cayley plane} $E_6/P_6$ are the $10$-dimensional subspaces of the form
\[
x\times J=\{x\times y\mid y\in J\}
\]
for $x\in J$. 
Each $V_i$ represents a point, an $i$-space in $Fl(i)=E_6/P_i$. 

Two $i$-spaces are {\it incident} exactly when they coincide. The incidence relation is given by inclusion except that
\begin{enumerate}
\item $V_2$ and $V_5$ are incidence if and only if $\text{dim}(V_2\cap V_5)=3$
\item $V_2$ and $V_6$ are incidence if and only if $\text{dim}(V_2\cap V_6)=5$.
\end{enumerate}
For the second condition, the intersection space is necessarily a nonmaximal totally singular subspace of $J$.
 Furthermore, the flag variety $Fl(I)$ associated to $I=\{i_1,\ldots,i_m\}$ is expressed as
\[
\left\{ (V_1,\ldots,V_m)\in Fl(i_1)\times\cdots\times Fl(i_m)\mid V_{i_j} \;\text{ is incident to } V_{i_k}\text{ for all }1\leq j,k\leq m\right\}.
\]

For the exceptional group $G=E_6$, $Fl(i)=E_6/P_i$ can be embedded to a projective space by the {\it minimal embedding} $\iota_i:Fl({i})\hookrightarrow\mathbb{P}^\mathcal{N}$, where the dimension $\mathcal{N}$ is given by
\[
\mathcal{N}=\begin{cases}
26& \text{if }i=1,6\\
77& \text{if }i=2\\
350& \text{if }i=3,5\\
2924& \text{if }i=4
\end{cases},
\] 
see \cite[p.304]{OV90}

\subsection{Schubert varieties}
Let $W$ be the Weyl group of $G$ and $\ell:W\rightarrow \mathbb{N}$ the length function. We denote by $w_0$ the longest element. The Weyl group $W_{P}$ of $P=P_I$ is the subgroup of $W$ generated by the simple reflections $\{s_j\mid j\notin I\}$. Let $W^P$ denote the set of minimal length representatives for the cosets in $W/W_P$. 

For any $w \in W^P$, the Schubert variety $X_{w,P}$ of dimension $\ell(w)$ and the opposite Schubert variety $\Omega_{w,P}$ of dimension $\text{dim}(G/P) - \ell(w)$ are defined as the closures of the $B$-orbit $X_{w,P}^\circ = BwP/P$ and the $B^-$-orbit $\Omega_{w,P}^\circ = B^-wP/P$, respectively.

Let $\Phi^+$ and $\Phi^-$ be the positive and negative roots. As stated in \cite[Proposition 2.1, lemma 2.2]{TY09}, there is a bijection between the element $w\in W^P$ and the partition $\lambda:=\lambda(w)$, where the partition $\lambda$ is determined by the inversion set
\[
I(w)=\{\alpha \in \Phi^+\mid w\cdot \alpha\in \Phi^-\}.
\]
For instance, in our case of $E_6/P_6$, the partitions of the longest element $w_{\circ,P_6}$ and $w=s_4s_2s_1s_3s_4s_5s_6$ in $W^{P_6}$ are $(1,1,2,4,4,2,1,1)$ and $(1,1,2,2,1)$, respectively. They are depicted as the following diagram.

\[
\begin{picture}(51,65)
{
\put(-5,10){\color{gray}\rule{10.5mm}{10.5mm}}
\put(-35,10){\color{gray}\rule{26.5mm}{5.5mm}}

\multiput(10,55)(0,15){2}{\line(1,0){75}}
\multiput(10,55)(15,0){6}{\line(0,1){15}}

\multiput(10,40)(0,15){2}{\line(1,0){45}}
\multiput(10,40)(15,0){4}{\line(0,1){15}}

\multiput(-5,25)(0,15){2}{\line(1,0){45}}
\multiput(-5,25)(15,0){4}{\line(0,1){15}}

\multiput(-35,10)(0,15){2}{\line(1,0){75}}
\multiput(-35,10)(15,0){6}{\line(0,1){15}}
}
\end{picture}
\]
Note that the gray shaded regions indicate the partition for $w=s_4s_2s_1s_3s_4s_5s_6$. The dual (or complement) partition $\lambda^c$ is obtained via the {\it rotate} operation \cite[\S2.2]{TY09}, which is a $\pi$-rotation for the $E_6$ partition. In the example of $\lambda=(1,1,2,2,1)$ with $w_\lambda=s_4s_2s_1s_3s_4s_5s_6$, the dual partition is $(1,1,2,3,2)$ which corresponds to $w_\circ w_\lambda w_\circ^{-1} w_{\circ,P_6}$. Here $w_0$ is the longest element in $W$. We refer the reader to \cite[\S2.2]{TY09} for the precise correspondence between the elements $w\in W^P$, their associated partitions $\lambda(w)$, and the dual partitions.

\subsection{The Hilbert scheme}

For $r\geq 2$ and $d\geq 3$, and $N\geq r$, the Hilbert polynomial of a degree $d$ hypersurface $X\subset\mathbb{P}^r$ is $P_{d,r}(t)$, where
\[
P_{d,r}(t):=\binom{t+r}{r}-\binom{t+r-d}{r}.
\]

We can effectively study the Hilbert scheme $\mathrm{Hilb}_{P_{d,r}(t)}(G/P_{i})$ by identifying it with the space of degree $d$ hypersurfaces in $G/P_{i}$, as established in the following theorem.

\begin{thm}[ {\cite[Theorem 2.1]{Jeon26}} ]
The space of degree $d$ hypersurfaces of $\mathbb{P}^r$ in $G/P_{i}$ is in bijection with the Hilbert scheme $\mathrm{Hilb}_{P_{d,r}(t)}(G/P_{i})$.
\end{thm}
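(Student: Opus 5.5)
The plan is to reduce the statement to the corresponding fact for the ambient projective space and then cut down to $G/P_i$. Fix the minimal embedding $\iota_i\colon G/P_i\hookrightarrow\mathbb{P}^{\mathcal N}$. Since $G/P_i$ is a closed subscheme of $\mathbb{P}^{\mathcal N}$, the Hilbert scheme $\mathrm{Hilb}_{P_{d,r}(t)}(G/P_i)$ is the closed subscheme of $\mathrm{Hilb}_{P_{d,r}(t)}(\mathbb{P}^{\mathcal N})$ parametrizing those $Z\subset\mathbb{P}^{\mathcal N}$ with $Z\subset G/P_i$. Here the Hilbert polynomial is taken with respect to $\mathcal{O}(1)$ restricted from $\mathbb{P}^{\mathcal N}$. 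It therefore suffices to prove the following claim.

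\emph{Claim.} For $r\ge 2$ and $d\ge 3$, every closed subscheme $Z\subset\mathbb{P}^{\mathcal N}$ with Hilbert polynomial $P_{d,r}(t)$ is a degree $d$ hypersurface in a unique linear subspace $\Lambda\cong\mathbb{P}^r$.

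Given the claim, the bijection is $Z\mapsto(\Lambda,Z)$. Its inverse sends a degree $d$ hypersurface of some $\mathbb{P}^r\subset\mathbb{P}^{\mathcal N}$ that is contained in $G/P_i$ to itself. That inverse lands in the Hilbert scheme because a hypersurface of degree $d$ in $\mathbb{P}^r$ has Hilbert polynomial $P_{d,r}(t)$, which follows from the exact sequence $0\to\mathcal{O}_{\mathbb{P}^r}(t-d)\to\mathcal{O}_{\mathbb{P}^r}(t)\to\mathcal{O}_Z(t)\to0$.

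To prove the claim, I would first read off the numerical invariants: $P_{d,r}(t)$ has degree $r-1$ and leading coefficient $d/(r-1)!$, so $Z$ has dimension $r-1$ and degree $d$. Next I would pin down the linear span by Gotzmann theory. Writing $P_{d,r}(t)$ in Gotzmann form, I expect it to be
\[
\binom{t+r-1}{r-1}+\binom{t+r-2}{r-1}+\cdots+\binom{t+r-d}{r-1},
\]
so the Gotzmann number is $d$. Then $I_Z$ is $d$-regular, and its Hilbert function is governed by the lexicographic ideal with this polynomial, namely $(x_0,\dots,x_{\mathcal N-r-1},x_{\mathcal N-r}^d)$.

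The key step is to show that $I_Z$ contains $\mathcal N-r$ independent linear forms, i.e.\ $h^0(\mathcal I_Z(1))\ge \mathcal N-r$. My first attempt would be to degenerate $Z$ by a one-parameter subgroup of the torus, and then by the Borel group, to a Borel-fixed saturated ideal $I_0$ with the same Hilbert polynomial. Upper semicontinuity of $h^0(\mathcal I(1))$ runs the wrong way for this, so I would instead use that $h^0(\mathcal O_Z(1))\le P_{d,r}(1)=r+1$ whenever $H^1(\mathcal I_Z(1))=0$. That vanishing should come from Gotzmann's persistence together with the fact that the only Borel-fixed saturated ideals with polynomial $P_{d,r}$ and $d\ge3$ are the lex ideal (and possibly a few others, each of which I would check directly still has $r+1$ independent linear sections).

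Once $Z\subset\Lambda=\mathbb{P}^r$, the subscheme $Z$ has codimension one in $\Lambda$ with degree $d$. Its Hilbert polynomial equals that of a hypersurface, so $Z$ has no embedded or lower-dimensional components: the constant terms would otherwise differ. Hence $Z=V(f)$ with $\deg f=d$. Uniqueness of $\Lambda$ holds because $Z$ spans $\Lambda$, since a degree $d\ge 3$ hypersurface in $\mathbb{P}^r$ is nondegenerate.

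I expect the main obstacle to be the linear-span step. Classifying Borel-fixed ideals with polynomial $P_{d,r}$ (the hypothesis $d\ge3$ matters here, since for $d\le2$ extra configurations such as plane curves plus points can occur), or otherwise establishing $h^0(\mathcal I_Z(1))=\mathcal N-r$, is where the real work lies. If it proves messy, I would cite the known description of $\mathrm{Hilb}_{P_{d,r}(t)}(\mathbb{P}^{\mathcal N})$ as a projective bundle over $Gr(r+1,\mathcal N+1)$ used in \cite{Seong20}, and then only verify the restriction to $G/P_i$.
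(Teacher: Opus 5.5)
The paper gives no argument of its own here: it quotes \cite[Theorem 2.1]{Jeon26}, which, as in \cite{Seong20}, rests on the projective-space fact you isolate as your Claim, followed by imposing $Z\subset G/P_i$. So your reduction and your closing steps have the right shape.

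The genuine gap is the linear-span step, and the mechanism you propose for it fails. The implication ``$H^1(\mathcal{I}_Z(1))=0\Rightarrow h^0(\mathcal{O}_Z(1))\le P_{d,r}(1)=r+1$'' is false, for two reasons.

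First, $P_{d,r}(1)=r+1-\binom{r+1-d}{r}$, and this equals $r+1$ only when $2\le d\le r+1$.

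Second, the bound $h^0(\mathcal{O}_Z(1))\le\chi(\mathcal{O}_Z(1))$ already fails for honest hypersurfaces. For $d\ge r+2$ one has $H^{r-1}(\mathcal{O}_Z(1))\cong H^r(\mathcal{O}_{\mathbb{P}^r}(1-d))\neq 0$. In the paper's own case, a sextic threefold $Z\subset\mathbb{P}^4$ has $H^1(\mathcal{I}_Z(1))=0$ but $h^0(\mathcal{O}_Z(1))=5>4=P_{6,4}(1)$.

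Besides, $H^1(\mathcal{I}_Z(1))=0$ is not what you need. The left-exact sequence $0\to H^0(\mathcal{I}_Z(1))\to H^0(\mathcal{O}(1))\to H^0(\mathcal{O}_Z(1))$ shows that $h^0(\mathcal{O}_Z(1))\le r+1$ alone suffices. Gotzmann regularity would not give that vanishing anyway: $d$-regularity only kills $H^1(\mathcal{I}_Z(t))$ for $t\ge d-1$, which excludes $t=1$ once $d\ge 3$.

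The semicontinuity you discarded does run the right way if you apply it to $h^0(\mathcal{O}_Z(1))$ instead of $h^0(\mathcal{I}_Z(1))$. The closure of the $B$-orbit of $[Z]$ in the Hilbert scheme is irreducible and contains a Borel-fixed point $[Z_0]$. The locus $\{h^0(\mathcal{O}(1))\le r+1\}$ is open. If $Z_0$ is the lexicographic scheme (a $d$-fold hyperplane in a $\mathbb{P}^r$, with $h^0(\mathcal{O}_{Z_0}(1))=r+1$), this open set meets the orbit, and you get $h^0(\mathcal{I}_Z(1))\ge\mathcal{N}-r$. Equivalently, a reverse-lexicographic generic initial ideal of $I_Z$ is Borel-fixed, saturated, and has the same Hilbert function as $I_Z$, so it counts linear forms exactly.

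Either way, everything hinges on the lexicographic ideal $(x_0,\dots,x_{\mathcal{N}-r-1},x_{\mathcal{N}-r}^d)$ being the only Borel-fixed saturated ideal with Hilbert polynomial $P_{d,r}(t)$. You state this only as an expectation. Unless you prove it, or cite the projective-bundle description of $\mathrm{Hilb}_{P_{d,r}(t)}(\mathbb{P}^{\mathcal{N}})$ over the Grassmannian as the paper in effect does, your Claim is not established.

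Two smaller corrections:
\begin{enumerate}
\item Once $Z\subset\Lambda$, the discrepancy with $P_{d,r}$ is the Hilbert polynomial of the nonzero sheaf $(f)/I_Z$. That polynomial has degree at most $r-2$ and need not be constant, though your conclusion survives.
\item The hypothesis $d\ge 3$ is not what excludes ``plane curves plus points''; those have Hilbert polynomial $P_{d,2}(t)+1$. For your Claim, $d\ge 2$ is what makes $\Lambda$ unique. In the paper, $d\ge 3$ is used in Lemma~\ref{lem3.1}: a degree-$d$ hypersurface of $L$ lying on the quadrics that cut out $\mathcal{G}$ forces $L\subset\mathcal{G}$.
\end{enumerate}
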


\section{Disconnectedness of the Hilbert schemes of $E_6/P_6$}\label{sec3}

In this section, we explore the disconnectedness of the Hilbert scheme associated with the Cayley plane $E_6/P_6$, which is famously realized as the octonionic projective plane $\mathbb{OP}^2$. To simplify notation in what follows, we set $\mathcal{G}=E_6/P_6$. Let $d\geq 3$.

We start with a lemma about projective spaces that contain a threefold of $\mathcal{G}$.

\begin{lemma}\label{lem3.1}
Let $X$ be a degree $d$ threefold of $\mathbb{P}^4$ in $\mathcal{G}$. If $L$ is an $4$-dimensional projective space containing the threefold $X$ in $\mathbb{P}^{26}$, 
then the projective space $L$ is contained in $\mathcal{G}$.
\end{lemma}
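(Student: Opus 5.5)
The plan is to use that the Cayley plane $\mathcal{G}\subset\mathbb{P}^{26}$ is cut out scheme-theoretically by quadrics. Under the minimal embedding $\iota_6$, $\mathcal{G}$ is the locus of rank-one elements $x\in J$, i.e.\ the projectivization of $\{x\mid x^\#=0\}$. Since $x\mapsto x^\#$ is a quadratic map $J\to J$, $\mathcal{G}$ is the common zero locus of the $27$ quadratic forms $q_y(x)=\beta(x^\#,y)$, $y\in J$. In other words, the ideal of $\mathcal{G}$ is generated in degree $2$: the $27$ quadrics span the representation $J^\ast\cong V(\omega_1)$ of $E_6$. The lemma then reduces to a statement about restricting these quadrics to $L$.

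The key step is the following. Let $L\cong\mathbb{P}^4\subset\mathbb{P}^{26}$ contain $X$, where $X$ is a threefold of degree $d\geq 3$, so $X$ is a hypersurface of degree $d$ in $L$. For each $y\in J$, restrict the quadric $q_y$ to $L$; this gives a homogeneous polynomial $q_y|_L$ of degree $2$ on $L$. Since $X\subset\mathcal{G}$, every $q_y|_L$ vanishes on $X$. As $X$ is a hypersurface of $L$, its homogeneous ideal in $L$ is principal, generated by a form $F$ of degree $d$ (here we use that $X$ is a degree $d$ hypersurface in the sense of the Hilbert polynomial $P_{d,4}(t)$, so $I_{X/L}=(F)$). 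Hence $F$ divides $q_y|_L$ for every $y$. Because $\deg F=d\geq 3>2$, this forces $q_y|_L=0$ for all $y$. Therefore $L\subset V(q_y\mid y\in J)=\mathcal{G}$.

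The expected main obstacle is justifying cleanly that $\mathcal{G}$ is the zero scheme of the quadrics $\beta(x^\#,\cdot)$, rather than merely set-theoretically. For the set-theoretic containment $L\subset\mathcal{G}$, set-theoretic vanishing already suffices, since $\mathcal{G}$ is reduced and $L$ is reduced. So I would only invoke that the rank-one locus is defined by $x^\#=0$, which is exactly how the $6$-spaces are parametrized: $x\times J$ for $x$ of rank one, under the identification with the orbit of the highest weight vector. The remaining subtlety is the case where $X$ is non-reduced or reducible. Even then, the ideal of a degree $d$ hypersurface in $L$ is principal of degree $d$, and divisibility of a nonzero quadric by a degree $d\geq3$ form remains impossible. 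So the argument needs no reducedness of $X$; only the hypothesis $d\geq 3$ is essential.
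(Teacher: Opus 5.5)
Your proof is correct and follows the paper's approach. The paper takes the quadrics cutting out $\mathcal{G}$ from Landsberg--Manivel and defers to Seong's argument, which is exactly your step that each quadric restricted to $L$ lies in $I_{X/L}=(F)$ with $\deg F=d\geq 3$ and so vanishes identically. The step $q_y|_L\in(F)$ uses the scheme-theoretic inclusion $X\subset\mathcal{G}$, not just vanishing on points. That inclusion is automatic because $X$ is a point of the Hilbert scheme of $\mathcal{G}$, and it is what makes your non-reduced case go through.
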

\begin{proof}
According to \cite[\S6.2]{LM03}, the Cayley plane $\mathcal{G}$ is defined by the quadratic equations $\mathcal{K}_i$ for $i$ in some index set $I$. By the assumption, we have $X\subset L\cap(\cap_i \mathcal{K}_i)$. The rest of the arguments are precisely the same as in \cite[Theorem 3.2]{Seong20}, \cite[Lemma 3.1]{Jeon26}.
\end{proof}

By applying the previous lemma, we obtain the following result regarding the number of connected components of the Hilbert scheme.

\begin{thm}\label{t:3.2}
There are two connected components in the Hilbert scheme $\text{Hilb}_{P_{d,4}(t)}(\mathcal{G})$.
\end{thm}
\begin{proof}
Let $L$ be a projective space of dimension $4$ in $\mathbb{P}^{26}$ containing a degree $d$ hypersurface $X$ in $\mathbb{P}^4\subset \mathcal{G}$.

Using the Pieri's rule for the $E_6$-Grassmannian, $E_6/P_6$ \cite{TY09}, the possible cohomology classes for $L$ are either $\sigma_{\lambda^1}$ or $\sigma_{\lambda^2}$ where the partitions $\lambda^i=\lambda^i(w_i)$ for $i=1,2$ are given by
\[
\lambda^1=(1,1,2,4,4)\quad\text{and}\quad\lambda^2=(1,1,2,4,3,1).
\]
Then the possible cohomology class $[X]$ for the hypersurface $X$ of degree $d$ is 
\begin{equation}\label{e:class for X}
[X]=d\cdot \sigma_\mu, \mu=(1,1,2,4,4,1).
\end{equation}
for both cases $[L]=\sigma_{\lambda^i}$ for $i=1,2$. 

We know from \cite[Theorem 1.1]{HM} that the projective spaces in $E_6$-Grassmannians whose cohomology class is exactly the same as the one for $L$ are related by linear automorphisms on that Grassmannians. So, the set of $4$-dimensional projective spaces with cohomology class $\sigma_{\lambda^i}$ is connected for each $i=1,2$. 

Even though there is the only one possible cohomology class for $X$ as \eqref{e:class for X}, the cohomology classes $\sigma_{\lambda^1}$ and $\sigma_{\lambda^2}$ are different as they are parametrized by two different partitions $\lambda^i$. Hence, by Lemma \ref{l:3.3}, we have that the Hilbert scheme $\text{Hilb}_{P_{d,4}(t)}(\mathcal{G})$ is not connected.
\end{proof}

The proof of Theorem \ref{t:3.2} thus reduces to proving the following lemma.
\begin{lemma}\label{l:3.3}
Let $X$ and $X'$ be degree $d$ threefolds in $\mathcal{G}$ such that both $I_X$ and $I_{X'}$ in $\text{Hilb}_{P_{d,4}(t)}(\mathcal{G})$ are in the same connected components of the Hilbert scheme. If $Q, Q'\cong \mathbb{P}^4$ in $\mathcal{G}$ such that $X\subset Q, X'\subset Q'$, then the cohomology class of $Q$ in $\mathcal{G}$ is equal to the one of $Q'$ in $\mathcal{G}$.
\end{lemma}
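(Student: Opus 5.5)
The plan follows Seong's argument for Grassmannians \cite[Theorem 3.2]{Seong20}. The idea is to attach to each point of the Hilbert scheme the unique linear $\mathbb{P}^4$ it spans. We then show that this assignment is a morphism, so that the cohomology class of that $\mathbb{P}^4$ is a discrete invariant, locally constant along connected families.

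\textbf{Step 1: the spanned $\mathbb{P}^4$ is unique.} For a point $[X]\in\text{Hilb}_{P_{d,4}(t)}(\mathcal{G})$, the Hilbert polynomial $P_{d,4}(t)$ together with $d\geq 3$ forces $X$ to be a degree $d$ hypersurface in a unique linear space $Q\cong\mathbb{P}^4\subset\mathbb{P}^{26}$. Concretely, $h^0(I_X(1))$ on $\mathbb{P}^{26}$ equals $22$, and $X$ is not contained in any hyperplane of $Q$ because $d\geq 2$. So $Q$ is the linear span of $X$. By Lemma \ref{lem3.1}, $Q\subset\mathcal{G}$. Hence in the statement $Q$ is the span of $X$, $Q'$ is the span of $X'$, and neither depends on any choice.

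\textbf{Step 2: the span is a morphism.} Let $\mathcal{X}\subset \mathcal{H}\times\mathcal{G}$ be the universal family over a connected component $\mathcal{H}$. Since the Hilbert polynomial is constant, $\pi_*(I_{\mathcal{X}}(1))$ is locally free of rank $22$ by cohomology and base change. The needed vanishing is $h^1(I_X(1))=0$, which follows from the exact sequence $0\to I_{Q}\to I_X\to \mathcal{O}_Q(-d)\to 0$ together with $H^1(\mathcal{O}_{\mathbb{P}^4}(1-d))=0$ and the vanishing of $H^1(I_Q(1))$ for a linear space. Consequently $\pi_*(I_{\mathcal{X}}(1))$ defines a morphism $\phi:\mathcal{H}\to Gr(5,27)$, $[X]\mapsto Q$. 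Its image lies in the Fano scheme $F_4(\mathcal{G})$ of $\mathbb{P}^4$'s contained in $\mathcal{G}$, because Lemma \ref{lem3.1} holds pointwise and $F_4(\mathcal{G})$ is closed. Conversely, the map $\mathcal{H}\to F_4(\mathcal{G})$ is a projective bundle whose fibers are $\mathbb{P}(H^0(\mathcal{O}_{\mathbb{P}^4}(d)))$. This is the content anticipated in Theorem \ref{t:bundle}.

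\textbf{Step 3: the class is constant on connected families.} Suppose $[X]$ and $[X']$ lie in the same connected component. Then $\phi([X])=Q$ and $\phi([X'])=Q'$ lie in the same connected component of $F_4(\mathcal{G})$. Over that component, the universal $\mathbb{P}^4$ gives a flat family of subvarieties of $\mathcal{G}$. Its fibers are algebraically equivalent and therefore have the same class in $H^{*}(\mathcal{G},\mathbb{Z})$; in fact the Schubert coefficients are intersection numbers with opposite Schubert classes, which are constant in flat families. Therefore $[Q]=[Q']$.

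The main obstacle is Step 2, namely checking that $I_{\mathcal{X}}(1)$ has constant $h^0$ and vanishing $h^1$ fibrewise, so that the span map is a morphism rather than only a set map. I would first handle this by the explicit sequence above. If scheme-theoretic subtleties arise, such as nonreduced points of the Hilbert scheme, I would instead argue via the closed incidence correspondence $\{([X],Q): X\subset Q\}\subset\mathcal{H}\times F_4(\mathcal{G})$. Its projection to $\mathcal{H}$ is bijective and proper by Step 1, and this already suffices to transport connectedness from $\mathcal{H}$ to the image in $F_4(\mathcal{G})$.
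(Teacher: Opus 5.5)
Your proposal is correct and follows the same route as the paper. You deform $X$ to $X'$ within the connected component, note that the spanning $\mathbb{P}^4$ (unique since $d\geq 2$, and contained in $\mathcal{G}$ by Lemma \ref{lem3.1}) moves along with it, and conclude that its class is constant. Your Step 2 uses $h^1(I_X(1))=0$ and cohomology and base change to make the span map $\mathcal{H}\to Gr(5,27)$ a morphism, which supplies the justification the paper leaves implicit when it says the flat family ``induces a continuous deformation between $Q$ and $Q'$.''
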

\begin{proof}
We adapt the deformation strategy from \cite[Lemma 4.1]{Seong20}. We consider the minimal embedding $\mathcal{G}\rightarrow \mathbb{P}^{26}_\mathbb{C}$. We let $I_Q$ and $I_{Q'}$ be ideals in $\mathbb{P}^{26}$ corresponding to projective $4$-spaces $Q$ and $Q'$. The inclusion $X\subset Q$ and $X'\subset Q'$ induce the inclusions 
\[
I_Q\subset I_X\text{ and } I_{Q'}\subset I_{X'}.
\]

As $I_X$ and $I_{X'}$ are in the same connected component in the Hilbert scheme, one can find a connecting flag family $\{I_t\}$ where $t$ is the deformation parameter so that $I_0=I_X$ and $I_1=I_{X'}$. This flat family parametrizes a deformation from $X$ to $X'$. Since $Q$ and $Q'$ are uniquely determined by the spaces containing these degree $d$ threefolds, it also induces a continuous deformation between $Q$ and $Q'$ in $\mathcal{G}$. This concludes that the cohomology classes for $Q$ and $Q'$ coincide.
\end{proof}

Finally, the disconnectedness of the Hilbert scheme is established by combining Theorem \ref{t:3.2} and Lemma \ref{l:3.3}.
\begin{cor}\label{c:discon}
Let $d\geq 3$. The Hilbert scheme $\text{Hilb}_{P_{d,4}(t)}(\mathcal{G})$ is disconnected. 
\end{cor}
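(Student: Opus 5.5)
The plan is to show that the connected components of $\text{Hilb}_{P_{d,4}(t)}(\mathcal{G})$ are detected by a discrete invariant: the cohomology class of the linear span of the threefold. We exhibit two points of the Hilbert scheme on which this invariant takes different values. By the identification of the Hilbert scheme with the space of degree $d$ hypersurfaces in some $\mathbb{P}^4\subset\mathcal{G}$ (the theorem quoted from \cite[Theorem 2.1]{Jeon26}), every point $I_X$ corresponds to a degree $d$ threefold $X$ whose span $L\cong\mathbb{P}^4\subset\mathbb{P}^{26}$ lies in $\mathcal{G}$ by Lemma \ref{lem3.1}. Since $d\geq 3$, $X$ is not contained in a hyperplane of $L$, so $L$ is uniquely determined as the linear span of $X$. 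This gives a well-defined map $I_X\mapsto[L]\in H^*(\mathcal{G},\mathbb{Z})$.

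The first step is to produce one threefold in each class. By the Pieri/Schubert computation recorded in the proof of Theorem \ref{t:3.2}, there exist linear $\mathbb{P}^4$'s in $\mathcal{G}$ with class $\sigma_{\lambda^1}$ and with class $\sigma_{\lambda^2}$. These are $\lambda^1=(1,1,2,4,4)$ and $\lambda^2=(1,1,2,4,3,1)$. Concretely, I would realize them as the two families of maximal linear spaces in the Cayley plane: one is contained in an $E_6/P_1$-type $\mathbb{P}^5$, and the other is a maximal $\mathbb{P}^4$ not contained in any $\mathbb{P}^5$. I would check that they are Schubert varieties for the corresponding $w\in W^{P_6}$. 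Then I take any degree $d$ hypersurface $X_i$ in a representative $L_i$, for $i=1,2$, which gives points $I_{X_1},I_{X_2}$ of the Hilbert scheme.

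The second step is to apply Lemma \ref{l:3.3}. If $I_{X_1}$ and $I_{X_2}$ were in the same connected component, then $[L_1]=[L_2]$. But $\sigma_{\lambda^1}\neq\sigma_{\lambda^2}$, since distinct Schubert classes form part of a $\mathbb{Z}$-basis of cohomology. Hence the Hilbert scheme has at least two components, which is exactly the corollary.

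The main obstacle is really the justification of Lemma \ref{l:3.3} itself. In a flat family $\{I_t\}$ the spans $L_t$ must vary algebraically. I would handle this by noting that $h^0(\mathcal{I}_{X_t}(1))$ is constant, since it equals the number of linear forms cutting out a $\mathbb{P}^4$ in $\mathbb{P}^{26}$, namely $22$. It is upper semicontinuous, and it is constant because the Hilbert polynomial forces every member to be a hypersurface in some $\mathbb{P}^4$. It follows that $t\mapsto L_t$ is a morphism from the (connected) base to the Fano scheme of $\mathbb{P}^4$'s in $\mathcal{G}$. The class of $L_t$ is then locally constant, because it is the class of a fibre of a flat family, and so it is constant on a connected base.
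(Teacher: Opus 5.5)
Your proposal is correct and is essentially the paper's argument: Theorem \ref{t:3.2} supplies two distinct Schubert classes of linear $\mathbb{P}^4$'s in $\mathcal{G}$, and Lemma \ref{l:3.3} says the class of the span is constant on a connected component. Your justification of Lemma \ref{l:3.3} is more careful than the paper's (constant $h^0(\mathcal{I}_{X_t}(1))=22$, applied over the reduced base so that Grauert gives a morphism to the Fano scheme of $\mathbb{P}^4$'s). The only slip is cosmetic: the first family of $\mathbb{P}^4$'s consists of hyperplanes in the $\mathbb{P}^5$'s, which are parametrized by $E_6/P_2$ rather than $E_6/P_1$, so these $\mathbb{P}^4$'s are not themselves maximal linear spaces.
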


We conclude this manuscript by establishing the geometric description of the Hilbert scheme.

Let $(i_1,i_2,i_3)\in \{(1,2,6),(1,3,6)\}$. We consider the flag variety 
\[
Fl(i_1,i_2,i_3)=\{(V_{i_1},V_{i_2},V_{i_3})\mid V_{i_j}\text{ is incident to }V_{i_k}\text{ for all }1\leq j,k\leq 3\},
\]
where $V_{i_j}$ is an $i_j$-space. Let $\pi_{i_1,i_2} \colon \mathrm{Fl}(i_1,i_2,i_3) \to \mathrm{Fl}(i_1,i_2)$ be the projection map that drops the $6$-space $V_6=V_{i_3}$. Similarly, let $\pi_{i_3} \colon \mathrm{Fl}(i_1,i_2,i_3) \to \mathrm{Fl}(i_3)$ be the natural projection onto $\mathrm{Fl}(i_3)$. This yields the following diagram:
\begin{equation}\label{e:tits}
\begin{tikzcd}
& Fl(i_1,i_2,i_3)\arrow[dl,"\pi_{i_3}"'] \arrow[dr,"\pi_{i_1,i_2}"] & \\
Fl(i_3)  & & Fl(i_1,i_2)
\end{tikzcd}
\end{equation}

\begin{thm}\label{t:bundle}
Let $E_{i_1,i_2}\subset V_{\omega_6}\otimes \mathcal{O}_{Fl(i_1,i_2)}$ be the homogenous vector bundle over $Fl(i_1,i_2)$ whose fiber over any point $z\in Fl(i_1,i_2)$ is the linear space $\langle \pi_6(\pi_{i_1,i_2}^{-1}(z))\rangle\subset V_{\omega_6}$. Then the Hilbert scheme $\mathrm{Hilb}_{P_{d,4}(t)}(\mathcal{G})$ is isomorphic to $\mathcal{M}=\mathbb{P}(\mathrm{Sym}^d{\mathcal{E}}^\vee)$, where $\mathcal{E}$ is the bundle over the disjoint union of flag varieties
\[
Fl(1,2)\sqcup Fl(1,3)
\]
such that 
\[
\mathcal{E} := 
\begin{cases} 
{E}_{1,2} & \text{on }Fl(1,2) \\
{E}_{1,3} & \text{on } Fl(1,3)
\end{cases}.
\]
\end{thm}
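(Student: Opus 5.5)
The plan is to build a morphism in each direction between $\mathcal{M}$ and the Hilbert scheme, and check that the two are mutually inverse. This is the strategy of \cite{Seong} for Grassmannians and of \cite{Jeon26} for orthogonal Grassmannians.

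\textbf{Step 1: maximal linear spaces.} First I would identify the linear spaces $\mathbb{P}^4\subset\mathcal{G}$. For each $z\in Fl(1,2)\sqcup Fl(1,3)$, the fiber $\pi_{i_1,i_2}^{-1}(z)$ is a family of $6$-spaces incident to a flag $(V_1,V_{i_2})$, and its image $\pi_6(\pi_{i_1,i_2}^{-1}(z))$ spans a linear space in $V_{\omega_6}$. I would check, via the Tits shadow description or by a direct dimension count with the incidence rules of \S\ref{sec2}, that this span is a $\mathbb{P}^4$ lying in $\mathcal{G}$. Conversely, every $\mathbb{P}^4\subset\mathcal{G}$ arises this way from a unique $z$. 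The two families are the two classes $\sigma_{\lambda^1},\sigma_{\lambda^2}$ of Theorem \ref{t:3.2}, and by \cite{HM} each family is a single homogeneous orbit, namely $Fl(1,2)$ and $Fl(1,3)$ respectively. It follows that $E_{i_1,i_2}$ is a rank-$5$ homogeneous subbundle of the trivial bundle $V_{\omega_6}\otimes\mathcal{O}$.

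\textbf{Step 2: the map $\mathcal{M}\to\mathrm{Hilb}$.} On $\mathbb{P}(\mathrm{Sym}^d\mathcal{E}^\vee)$ a point is a pair $(z,[f])$, where $f$ is a nonzero degree $d$ form on the fiber $E_z\cong\mathbb{C}^5$. Its zero locus is a degree $d$ hypersurface in $\mathbb{P}(E_z)\subset\mathcal{G}$, and every such hypersurface has Hilbert polynomial $P_{d,4}(t)$. The universal zero locus is flat over $\mathcal{M}$, so it defines a morphism $\Phi:\mathcal{M}\to\mathrm{Hilb}_{P_{d,4}(t)}(\mathcal{G})$.

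\textbf{Step 3: injectivity of $\Phi$.} A hypersurface of degree $d\geq 3$ spans its ambient $\mathbb{P}^4$, and by Lemma \ref{lem3.1} that span lies in $\mathcal{G}$. Hence the hypersurface determines both $z$ and $f$ up to scalar, so $\Phi$ is injective on points.

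\textbf{Step 4: bijectivity and the inverse.} By Theorem 2.1 every point of the Hilbert scheme is such a hypersurface, so $\Phi$ is bijective. To upgrade this to an isomorphism I would construct the inverse directly. Given a flat family $\mathcal{X}\to S$, the degree-$1$ part of the ideal has constant rank $22$ by flatness, since $h^0(\mathcal{I}_X(1))$ is constant with a uniform vanishing of $h^1$. This gives a rank-$5$ subbundle of $V_{\omega_6}\otimes\mathcal{O}_S$ whose fibers are spans lying in $\mathcal{G}$. By Step 1 it induces a morphism $S\to Fl(1,2)\sqcup Fl(1,3)$, and the degree-$d$ equation gives a lift $S\to\mathcal{M}$.

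\textbf{Main obstacle.} I expect the hardest part to be Step 1, namely the classification of $\mathbb{P}^4$'s in the Cayley plane. Two things must be shown: every such space is the span of a fiber of the diagram \eqref{e:tits}, and the two families are parametrized exactly by $Fl(1,2)$ and $Fl(1,3)$. My first attempt would use the known description of linear spaces on $E_6/P_6$ as spans of lines through a point (the $V_1$ datum) in the cone $\mathbb{S}_{10}$ of the entry locus. The $\mathbb{P}^4$'s there are exactly the two families of maximal linear spaces of the $10$-dimensional spinor variety. This matches $V_{i_2}$ being either a $2$-space or a $3$-space.
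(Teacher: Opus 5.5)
There is a genuine gap, and it lies in Step 1. You claim that every $\mathbb{P}^4\subset\mathcal{G}$ comes from a \emph{unique} $z\in Fl(1,2)\sqcup Fl(1,3)$, and this is false on the $Fl(1,3)$ piece.

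Incidence of $1$-, $3$- and $6$-spaces is inclusion, and $F_1\subset F_3$. So the Tits transform of $z=(F_1,F_3)$ is $Y_z=\{V_6\mid F_1\subset F_3\subset V_6\}=\{V_6\mid F_3\subset V_6\}$, which does not depend on $F_1$. Equivalently, the Schubert $\mathbb{P}^4$ $X_{s_2s_4s_5s_6}$ has stabilizer $P_3$, not $P_{1,3}$. Indeed $s_1$ commutes with $s_2,s_4,s_5,s_6$ and lies in $W_{P_6}$, because node $1$ is disconnected from the chain $2$--$4$--$5$--$6$ once node $3$ is removed.

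Consequences:
\begin{itemize}
\item The map $z\mapsto\langle Y_z\rangle$ factors through the $\mathbb{P}^1$-bundle $Fl(1,3)\to Fl(3)$, and $E_{1,3}$ is pulled back from $Fl(3)$.
\item The homogeneous orbit you invoke via \cite{HM} is $E_6/P_3$, of dimension $25$, not $Fl(1,3)$, of dimension $26$.
\item Your $\Phi$ has $\mathbb{P}^1$-fibres on that piece, so Step 3 fails.
\item Step 4 cannot produce a morphism $S\to Fl(1,3)$: a family of such $\mathbb{P}^4$'s determines $F_3$ but no choice of $F_1\subset F_3$.
\end{itemize}
Since $\mathbb{P}(\mathrm{Sym}^d E_{1,3}^\vee)$ has dimension one more than the Hilbert-scheme component it should match, no argument can give the isomorphism as written.

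Your Steps 2--4 do work once $Fl(1,3)$ is replaced by $Fl(3)=E_6/P_3$, with fibre over $F_3$ the span of $\{V_6\supset F_3\}$. On $Fl(1,2)$ the parametrization is fine, since $P_{1,2}$ is exactly the stabilizer of $X_{s_3s_4s_5s_6}$. Your two-sided strategy is more careful than the paper's proof, but the paper's proof has the same omission. It identifies each fibre $\pi_{i_1,i_2}^{-1}(z)$ with a $\mathbb{P}^4$ and asserts the isomorphism without checking that $z\mapsto\langle Y_z\rangle$ is injective.

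The classification route you sketch for the main obstacle would also mislead you. $V_1$ is a point of the dual plane $E_6/P_1$, not of $\mathcal{G}$, and its Tits transform in $\mathcal{G}$ is an $8$-dimensional quadric. A $\mathbb{P}^4\subset\mathcal{G}$ through a point is the cone over a $\mathbb{P}^3$, not a $\mathbb{P}^4$, of the VMRT $\mathbb{S}_{10}\subset\mathbb{P}^{15}$. The two families of maximal linear spaces of $\mathbb{S}_{10}$ are $\mathbb{P}^4$'s and $\mathbb{P}^3$'s. They give, respectively, the $\mathbb{P}^5$'s of $\mathcal{G}$ (transforms of $2$-spaces) and the maximal $\mathbb{P}^4$'s (transforms of $3$-spaces). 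The remaining $\mathbb{P}^4$'s are hyperplanes of those $\mathbb{P}^5$'s, cut out by a choice of $F_1\subset F_2$. This is exactly why $F_1$ is needed alongside $F_2$ but is redundant alongside $F_3$.

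Finally, Step 4 needs scheme-theoretic input that you do not supply. A flat family only gives a morphism to the Fano scheme of $\mathbb{P}^4$'s on $\mathcal{G}$. You therefore need either of the following:
\begin{itemize}
\item that Fano scheme is reduced, i.e.\ $h^0(N_{\mathbb{P}^4/\mathcal{G}})$ equals $26$ resp.\ $25$;
\item $h^0(N_{X/\mathcal{G}})$ equals the dimension of the relevant component of $\mathcal{M}$, so that Zariski's main theorem upgrades the bijection $\Phi$ to an isomorphism.
\end{itemize}
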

\begin{proof}
 Let $\mathcal{Z}_{i_1,i_2}$ be the family of subvarieties defined by
\[
\mathcal{Z}_{i_1,i_2} := \pi_{i_1,i_2}^{-1}(y) = \left\{ V_{6} \;\middle|\; 
\begin{array}{l}
 F_{i_1}\text{ is incident to }F_{i_2}, \\
F_{i_1}\text{ is incident to }V_{6}, \\
F_{i_2}\text{ is incident to }V_{6}
\end{array}
\right\}
\]
for $y=(F_{i_1},F_{i_2})\in Fl(i_1,i_2)$.
This family is then parametrized by the two-step flag variety $Fl(i_1,i_2)$. It is known that 
the maps
\begin{align}
 E_6/P_{1,2,6}&\rightarrow E_6/P_{1,2}\label{a:P4}\\
 E_6/P_{1,3,6}&\rightarrow E_6/P_{1,3}\label{b:P4}
\end{align}
are $\mathbb{P}^4$-bundles as their fibers are $P_{1,2}/P_{1,2,6}$ and $P_{1,3}/P_{1,3,6}$ in $\mathcal{G}:=E_6/P_6$ respectively. This implies that the fiber $\mathcal{Z}_{(i_1,i_2)}$ can be identified with a projective space, and hence, a family of linear spaces $\mathbb{P}^4$ in $\mathcal{G}$ is parametrized by the flag variety $Fl(i_1,i_2)$. Especially, the cohomology class of $\mathcal{Z}_{(i_1,i_2)}$ in the Chow ring $A^*(\mathcal{G})$ is given by 
\begin{equation}
[\mathcal{Z}_{(i_1,i_2)}]=\begin{cases}
\sigma_{\lambda^1} & \text{if } (i_1,i_2)=(1,2), \\
\sigma_{\lambda^2} & \text{if } (i_1,i_2)=(1,3),
\end{cases}
\end{equation}
where the
the partitions $\lambda^1$ and $\lambda^2$ are 
\[
\lambda^1=(1,1,2,4,3,1) \text{ and } \lambda^2=(1,1,2,4,4).
\]
The corresponding Schubert varieties are
\[
X_{w^1}(F_1,F_2)=\left\{V_6\mid F_1\subset F_2, F_1\subset V_6, \mathrm{dim}\;(F_2\cap V_6)=5 \right\}\cong P_{1,2}/P_{1,2,6} \cong \mathbb{P}^4\subset E_6/P_6
\]
associated with $w^1=s_3s_4s_5s_6\in W^{P_6}$ for $\lambda^1$, and 
\[
X_{w^2}(F_1,F_3)=\left\{V_6\mid F_1\subset F_3\subset V_6 \right\}\cong P_{1,3}/P_{1,3,6} \cong \mathbb{P}^4\subset E_6/P_6
\]
associated with $w^2=s_2s_4s_5s_6\in W^{P_6}$ for $\lambda^2:=\lambda(w^2)$ respectively, where $F_i$ and $V_i$ are $i$-spaces.

This can view this structure globally in the following way. By \cite[\S2.7.1]{LM04} and with the diagram \eqref{e:tits}, $Y_z:=\pi_6(\pi_{i_1,i_2}^{-1}(z))\subset Fl(6)=E_6/P_6$ is the {\it Tits transform} of $z\in Fl(i_1,i_2)$. As noted in \cite[\S2.7.3]{LM04}, by assigning each $z\in Fl(i_1,i_2)$ to its corresponding linear subspace $\langle Y_z\rangle\subset V_{\omega_6}$, we can construct a homogeneous vector bundle $E_{i_1,i_2}\rightarrow Fl(i_1,i_2)$ having fibers $\langle Y_z\rangle$. By this construction, $E_{i_1,i_2}$ forms a subbundle of the trivial bundle $V_{\omega_6}\otimes\mathcal{O}_{Fl(i_1,i_2)}$ over $Fl(i_1,i_2)$. Then $Fl(i_1,i_2,i_3)$ can be identified with the projectivization of $E_{i_1,i_2}$, so that 
\[
Fl(i_1,i_2,i_3)\cong \mathbb{P}(E_{i_1,i_2}).
\]

Let $\mathcal{M}=\mathbb{P}(\mathrm{Sym}^d(E_{i_1,i_2}))$ be the bundle over $Fl(i_1,i_2)$ for $d\geq 3$. Then the fiber of $\mathcal{M}$ over $z\in Fl(i_1,i_2)$ parametrizes the degree $d$ threefolds in $Z_{i_1,i_2}\cong Y_z\cong \mathbb{P}^4\subset \mathcal{G}$.
Hence, for each case, the bundle $\mathcal{M}\rightarrow Fl(i_1,i_2)$ is isomorphic to the connected component of $\mathrm{Hilb}_{P_{d,4}(t)}(\mathcal{G})$ linked to $\sigma_{\lambda^i}$. 
\end{proof}

\begin{ack}
We are grateful to Jaehyun Hong and Minseong Kwon for their helpful suggestions in finding the relevant references necessary for this project. We thank the members of the Center for Complex Geometry at the Institute for Basic Science for their valuable support.
\end{ack}
\bibliographystyle{alpha}
\bibliography{biblio}
\end{document}

%% file: pack.tex
\usepackage{amsfonts}
\usepackage{amssymb}
\usepackage{extarrows}
\usepackage{cleveref}
\usepackage{amsthm}
\usepackage{amscd}
\usepackage{amsmath}
\usepackage{mathtools}
\usepackage{mathrsfs}

\usepackage{color}	
\definecolor{due}{RGB}{0,76,147}

\usepackage{graphicx}	
\usepackage{multicol}	
\usepackage{wrapfig}
\usepackage{enumitem}

\usepackage{longtable}

\theoremstyle{definition}
\newtheorem{defi}{Definition}[section]
\theoremstyle{plain}
\newtheorem{thm}[defi]{Theorem}

\newtheorem{introthm}{Theorem}[section]

\newtheorem{cor}[defi]{Corollary}
\newtheorem{lemma}[defi]{Lemma}
\theoremstyle{remark}

\theoremstyle{definition}

\newtheorem*{ack}{Acknowledgement}

  \makeatletter
\newcommand{\xdashrightarrow}[2][]{\ext@arrow 0359\rightarrowfill@@{#1}{#2}}
\newcommand{\xdashleftarrow}[2][]{\ext@arrow 3095\leftarrowfill@@{#1}{#2}}
\newcommand{\xdashleftrightarrow}[2][]{\ext@arrow 3359\leftrightarrowfill@@{#1}{#2}}
\def\rightarrowfill@@{\arrowfill@@\relax\relbar\rightarrow}
\def\leftarrowfill@@{\arrowfill@@\leftarrow\relbar\relax}
\def\leftrightarrowfill@@{\arrowfill@@\leftarrow\relbar\rightarrow}
\def\arrowfill@@#1#2#3#4{%
  $\m@th\thickmuskip0mu\medmuskip\thickmuskip\thinmuskip\thickmuskip
   \relax#4#1
   \xleaders\hbox{$#4#2$}\hfill
   #3$%
}
\makeatother
